\newtheorem{theorem}{Theorem}[section]
\newtheorem{corollary}[theorem]{Corollary}
\newtheorem{lemma}[theorem]{Lemma}
\theoremstyle{definition}
\numberwithin{equation}{section}
\begin{document}
\title{Some characterizations of asymmetric truncated Toeplitz operators}
\author{Bartosz {\L}anucha, Ma{\l}gorzata Michalska}

\address{
Bartosz {\L}anucha,  \newline Institute of Mathematics,
\newline Maria Curie-Sk{\l}odowska University, \newline pl. M.
Curie-Sk{\l}odowskiej 1, \newline 20-031 Lublin, Poland}
\email{bartosz.lanucha@poczta.umcs.lublin.pl}

\address{
Ma{\l}gorzata Michalska,  \newline Institute of Mathematics,
\newline Maria Curie-Sk{\l}odowska University, \newline pl. M.
Curie-Sk{\l}odowskiej 1, \newline 20-031 Lublin, Poland}
\email{malgorzata.michalska@poczta.umcs.lublin.pl}

\date{\today}
\subjclass[2010]{47B32, 47B35, 30H10.}
\keywords{model space, truncated
Toeplitz operator, asymmetric truncated
Toeplitz operator}
\begin{abstract}
It was recently proved that in some special cases asymmetric truncated Toeplitz operators can be characterized in terms of compressed shifts and rank-two operators of special form. In this paper we show that such characterizations hold in all cases.
\end{abstract}
\maketitle

\baselineskip1.4\baselineskip

\section{Introduction}
As usual, let $H^2$ denote the classical Hardy space. The space $H^2$ can be seen as a space of functions analytic in the unit disk
$\mathbb{D}=\{z:|z|<1\}$ or as a closed
subspace of $L^2:=L^2(\partial\mathbb{D})$. In the first case $H^2$ consists of functions analytic in $\mathbb{D}$ with square summable MacLaurin coefficients and in the second it consists of functions from $L^2$ such that their Fourier coefficients with negative indices vanish.

The unilateral shift $S$ on $H^2$ is the operator of multiplication by the independent variable, that is,
  $$Sf(z)=z\cdot f(z).$$
The adjoint of $S^*$ of $S$ is called the backward shift. A simple verification shows that
  $$S^*f(z)=\frac{f(z)-f(0)}{z}.$$

The famous Beurling theorem provides a characterization of all $S^*$-invariant subspaces of $H^2$. Namely, a closed nontrivial subspace of $H^2$ is $S^*$-invariant if and only if it is of the form
  $$K_\alpha=H^2\ominus \alpha H^2,$$
where $\alpha$ is an inner function, i.e., $\alpha$ belongs to the algebra $H^{\infty}$ of bounded analytic functions and $|\alpha|=1$ a.e. on $\partial
\mathbb{D}$. The space $K_{\alpha}$ is called the model space associated with $\alpha$.

The operators $S$ and $S^*$ are two examples of classical Toeplitz operators. Let $P$ denote the orthogonal projection from $L^2$ onto $H^2$. A Toeplitz operator $T_{\varphi}$ with symbol $\varphi\in L^\infty$ is defined on $H^2$ by
  $$T_{\varphi}f=P(\varphi f).$$
Clearly, if $\varphi\in H^\infty$, then $T_\varphi$ is just the multiplication by $\varphi$. Also, the operator $T_{\varphi}$ is densely defined on bounded functions whenever $\varphi\in L^2$, and extends to a bounded operator on $H^2$ if and only if $\varphi \in L^\infty$. We have $S=T_z$ and $S^{*}=T_{\overline{z}}$.

Truncated Toeplitz operators are compressions of Toeplitz operators to model spaces. More precisely, a truncated Toeplitz operator $A_{\varphi}^{\alpha}$ with a symbol $\varphi\in L^2$ is defined on the model space $K_{\alpha}$ by
  $$A_{\varphi}^{\alpha}f=P_{\alpha}(\varphi f),$$
where $P_{\alpha}$ is the orthogonal projection from $L^2$ onto $K_{\alpha}$. In particular, $S_\alpha=A^\alpha_z$ is called the compressed shift. Since $K_\alpha$ is $S^*$-invariant, it easily follows that $S^*_\alpha=S^*_{|K_\alpha}$.

The study of the class of truncated Toeplitz operators was started in 2007 with D. Sarason's paper \cite{s} (see \cite{gar3}). Recently, the authors in \cite{ptak} and \cite{part2,part} initiated the study of so-called asymmetric truncated Toeplitz operators (see also \cite{blicharz1,blicharz2}).

Let $\alpha$, $\beta$ be two inner functions. An asymmetric
truncated Toeplitz operator $A_{\varphi}^{\alpha,\beta}$ with a symbol $\varphi\in
L^2$ is the
operator from $K_{\alpha}$ into $K_{\beta}$ given by
  $$A_{\varphi}^{\alpha,\beta}f=P_{\beta}(\varphi f).$$
Clearly, $A_{\varphi}^{\alpha}=A_{\varphi}^{\alpha,\alpha}$.
Put
  $$\mathscr{T}(\alpha,\beta)=\{A_{\varphi}^{\alpha,\beta}\colon\, \varphi\in
  L^2\ \mathrm{and}\ A_{\varphi}^{\alpha,\beta}\
  \mathrm{is\ bounded}\}.$$

A bounded linear operator $T$ on $H^2$ is a Toeplitz operator if and only if $T-S^* TS=0$. D. Sarason \cite{s} proved that a bounded linear operator $A$ on $K_\alpha$ is a truncated Toeplitz operator if and only if $A-S_{\alpha}^* AS_\alpha$ is an operator of a special kind and rank at most two. Similar characterization for the operators from $\mathscr{T}(\alpha,\beta)$ were proved for the case when $\beta$ divides $\alpha$ (that is, $\alpha/\beta$ is an inner function) \cite{ptak}, and for the case when $\alpha$ and $\beta$ are finite Blaschke products (in other words, when $K_\alpha$ and $K_\beta$ are finitely dimensional) \cite{L2}.

In this paper we provide such characterizations of the operators from $\mathscr{T}(\alpha,\beta)$ for all $\alpha, \beta$.

\section{Characterizations of asymmetric truncated Toeplitz operators}
Recall that  a model space $K_{\alpha}$ is a reproducing kernel Hilbert space. That is to say that for every $f$ in the model space $K_{\alpha}$ and each $w\in\mathbb{D}$,
  $$f(w)=\langle f, k_{w}^{\alpha}\rangle,$$
where the reproducing kernel function $k_{w}^{\alpha}$ is of the form
  $$k_{w}^{\alpha}(z)=\frac{1-\overline{\alpha(w)}\alpha(z)}{1-\overline{w}z}.$$
  Observe that since $k_{w}^{\alpha}\in H^{\infty}$, the set $K_{\alpha}^{\infty}=K_{\alpha}\cap H^{\infty}$ is dense in $K_{\alpha}$.

A conjugate kernel is the function
$\widetilde{k}_{w}^{\alpha}=C_{\alpha}{k}_{w}^{\alpha}$, where $C_{\alpha}:L^2\to L^2$ is given by
  $$C_{\alpha}f(z)=\widetilde{f}(z)=\alpha(z)\overline{z}\overline{f(z)},\quad |z|=1.$$
It can be seen that $C_\alpha$, as defined on $L^2$, is an antilinear isometric involution (a map with such property is called a conjugation). It can also be verified that the conjugation $C_\alpha$ preserves $K_\alpha$. Therefore $\widetilde{k}_{w}^{\alpha}\in K_\alpha$ for all $w\in\mathbb{D}$ and a simple computation gives
  $$\widetilde{k}_{w}^{\alpha}(z)=\frac{\alpha(z)-\alpha(w)}{z-w}.$$

\begin{theorem}\label{thm_ATTO_rank_two_char}
Let $A$ be a bounded linear operator from $K_\alpha$ into $K_\beta$. Then $A\in\mathscr{T(\alpha,\beta)}$ if and only if there exist $\psi\in K_\beta$ and $\chi\in K_\alpha$ such that
\begin{align}\label{eq_ATTO_rank_two_char}
  A-S_\beta A S^*_\alpha=\psi\otimes k^\alpha_0 +k^\beta_0\otimes \chi.
\end{align}
\end{theorem}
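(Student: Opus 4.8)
The plan is to prove both implications by explicit computations in the model spaces, the only delicate point being a limiting argument in the sufficiency part. Throughout one uses that $S_\alpha^{*}=S^{*}|_{K_\alpha}$ and that $P_\beta g=Pg-\beta P(\bar\beta Pg)$ for $g\in L^2$.

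For necessity, let $A=A_\varphi^{\alpha,\beta}$ with $\varphi\in L^2$ and $A$ bounded, and work on the dense subspace $K_\alpha^{\infty}$, on which $S_\alpha^{*}f=(f-f(0))/z$, so $f=zS_\alpha^{*}f+f(0)$, and all products with $\varphi$ remain in $L^2$. Put $g=\varphi S_\alpha^{*}f$. Then $\varphi f=zg+f(0)\varphi$ gives $A_\varphi^{\alpha,\beta}f=P_\beta(zg)+f(0)P_\beta\varphi$, while $S_\beta A_\varphi^{\alpha,\beta}S_\alpha^{*}f=P_\beta(zP_\beta g)$, hence
\[
  A_\varphi^{\alpha,\beta}f-S_\beta A_\varphi^{\alpha,\beta}S_\alpha^{*}f=f(0)\,P_\beta\varphi+P_\beta\bigl(z(I-P_\beta)g\bigr).
\]
Since $(I-P_\beta)g=(I-P)g+\beta P(\bar\beta Pg)$, the term $\beta P(\bar\beta Pg)$ contributes $0$ after $P_\beta(z\,\cdot\,)$ (it lies in $\beta H^2$), and $P_\beta\bigl(z(I-P)g\bigr)=\langle zg,1\rangle\,k_0^\beta=\langle\varphi(f-f(0)),1\rangle\,k_0^\beta$. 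Writing $f(0)=\langle f,k_0^\alpha\rangle$ and $\langle\varphi(f-f(0)),1\rangle=\langle f,\bar\varphi\rangle-f(0)\langle\varphi,1\rangle=\langle f,\,P_\alpha\bar\varphi-\overline{\langle\varphi,1\rangle}\,k_0^\alpha\rangle$, one obtains \eqref{eq_ATTO_rank_two_char} on $K_\alpha^{\infty}$ with $\psi=P_\beta\varphi\in K_\beta$ and $\chi=P_\alpha\bar\varphi-\overline{\langle\varphi,1\rangle}\,k_0^\alpha\in K_\alpha$; both sides are bounded, so the identity extends to all of $K_\alpha$.

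For sufficiency, iterate \eqref{eq_ATTO_rank_two_char}: with $T=\psi\otimes k_0^\alpha+k_0^\beta\otimes\chi$ one gets $A=S_\beta^{n}A(S_\alpha^{*})^{n}+\sum_{j=0}^{n-1}S_\beta^{j}T(S_\alpha^{*})^{j}$ for every $n$. Since $(S_\alpha^{*})^{n}\to0$ strongly on $K_\alpha$ and $\|S_\beta\|\le1$, boundedness of $A$ forces $S_\beta^{n}A(S_\alpha^{*})^{n}\to0$ strongly, so the series converges strongly to $A$. Using $(u\otimes v)C=u\otimes C^{*}v$, $C(u\otimes v)=(Cu)\otimes v$, the identity $S_\beta^{j}h=P_\beta(z^{j}h)$ for $h\in K_\beta$ (in particular $S_\beta^{j}k_0^\beta=P_\beta(z^{j})$, as $z^{j}\beta\in\beta H^2$), and $\langle(S_\alpha^{*})^{j}f,k_0^\alpha\rangle=\widehat f(j)$, $\langle(S_\alpha^{*})^{j}f,\chi\rangle=\langle f,z^{j}\chi\rangle=\widehat{f\bar\chi}(j)$, the $j$-th summand sends $f\in K_\alpha^{\infty}$ to $\widehat f(j)\,P_\beta(z^{j}\psi)+\widehat{f\bar\chi}(j)\,P_\beta(z^{j})$, so the $n$-th partial sum equals $P_\beta\bigl(p_n(f)\,\psi\bigr)+P_\beta\bigl(p_n(P(f\bar\chi))\bigr)$, where $p_n(u)=\sum_{k=0}^{n-1}\widehat u(k)z^{k}$. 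Letting $n\to\infty$, the second term tends to $P_\beta(f\bar\chi)$ since $P(f\bar\chi)\in H^2$ and $P_\beta$ is bounded on $H^2$; the first term then also converges, and testing against the dense set $K_\beta^{\infty}$ identifies its limit as $P_\beta(f\psi)$. Hence $Af=P_\beta\bigl(f(\psi+\bar\chi)\bigr)=A_\varphi^{\alpha,\beta}f$ for all $f\in K_\alpha^{\infty}$, with $\varphi=\psi+\bar\chi\in L^2$; by density $A_\varphi^{\alpha,\beta}$ is bounded and equals $A$, so $A\in\mathscr{T}(\alpha,\beta)$.

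The main obstacle is precisely this limit of $P_\beta(p_n(f)\psi)$. The natural symbol $\psi+\bar\chi$ need not be bounded and $p_n(f)\psi\to f\psi$ only in $L^1$, so continuity of $P_\beta$ is not directly available there; the remedy is to obtain norm-convergence of that partial sum for free — from the convergence of the whole series together with that of the co-analytic partial sum — and only afterwards pin down the limit weakly against $K_\beta^{\infty}$. A recurring precaution is to carry out every computation on $K_\alpha^{\infty}$ (where $S_\alpha^{*}$ acts, products against $L^2$-symbols lie in $L^2$, and Taylor polynomials converge in $L^2$), invoking density and the boundedness of $A$ only at the final step.
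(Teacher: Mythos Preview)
Your proof is correct and follows the same Sarason-style strategy as the paper: a direct computation for necessity, and for sufficiency the telescoping identity $A=\sum_{j\ge 0} S_\beta^{\,j} T (S_\alpha^{*})^{j}$ together with the symbol $\varphi=\psi+\bar\chi$. The only organizational difference is in how the sufficiency limit is identified: the paper proves separately (its Lemma~\ref{lem_ATTO_rank2_aux}) that $\langle A_\varphi^{\alpha,\beta}f,g\rangle$ admits the very same series expansion on $K_\alpha^\infty\times K_\beta^\infty$ --- showing the remainder $\langle S_\beta^{N+1}A_\varphi^{\alpha,\beta}(S_\alpha^{*})^{N+1}f,g\rangle$ tends to zero via the estimate $\|P(\bar z^{\,N+1}\bar\chi f)\|\le\|f\|_\infty\|\chi\|_2$ and the strong convergence $(S^{*})^{N}\to 0$ --- and then matches the two series, whereas you compute the partial sums explicitly as $P_\beta\bigl(p_n(f)\psi\bigr)+P_\beta\bigl(p_n(P(f\bar\chi))\bigr)$ and recover the limit by your norm-then-weak trick. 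Both routes are valid and of comparable length.
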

The proof of Theorem \ref{thm_ATTO_rank_two_char} follows the proof given by D. Sarason for truncated Toeplitz operators \cite[Theorem 4.1]{s} and requires some auxiliary lemmas.
\begin{lemma}\label{lem_ATTO_rank2_char_aux1}
For every $\varphi\in L^2$ the equality
\begin{align*}
  A^{\alpha,\beta}_\varphi-S_\beta A^{\alpha,\beta}_\varphi S^*_\alpha=\psi\otimes k^\alpha_0 +k^\beta_0\otimes \chi
\end{align*}
holds on $H^2$, where
\begin{align*}
  &\psi=S_\beta P_\beta\left(\overline{z}\varphi\right) \in K_\beta,\\
  &\chi=P_\alpha\left(\overline{\varphi}\right) \in K_\alpha.
\end{align*}
\end{lemma}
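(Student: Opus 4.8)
The plan is to verify the operator identity by a direct computation, testing both sides on an arbitrary $f \in K_\alpha^\infty$ (a dense subset of $K_\alpha$, which suffices since all operators in sight are bounded) and then extending to $H^2$. First I would recall the basic commutation formulas for compressed shifts: for $g \in K_\beta$ one has $S_\beta g = P_\beta(zg)$ and $S_\beta^* g = S^* g = \overline{z}(g - g(0)) = P_\beta(\overline{z}g)$, and similarly for $\alpha$. The key first step is to rewrite $S_\beta A^{\alpha,\beta}_\varphi S_\alpha^* f$. Since $S_\alpha^* f = \overline{z} f - \overline{z} f(0) = \overline{z} f - f(0) k_0^\alpha \overline{z}$ — more cleanly, $S_\alpha^* f = \overline{z}(f - f(0))$ because $f(0) = \langle f, k_0^\alpha\rangle$ and $k_0^\alpha \equiv 1$ — I get $A^{\alpha,\beta}_\varphi S_\alpha^* f = P_\beta(\overline{z}\varphi(f - f(0))) = P_\beta(\overline{z}\varphi f) - f(0) P_\beta(\overline{z}\varphi)$.

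Next I would apply $S_\beta = P_\beta M_z P_\beta$; the main subtlety is that $S_\beta P_\beta(h) = P_\beta(z P_\beta(h))$ need not equal $P_\beta(zh)$ in general, the discrepancy being exactly $-P_\beta(z(I - P_\beta)h)$. Controlling this discrepancy is the heart of the calculation. Writing $h = \overline{z}\varphi f$, one has $(I - P_\beta)h = (I-P_\beta)(\overline{z}\varphi f)$; since $\overline{z}\varphi f \in L^1$ one should work with bounded $f$ and argue by density, or better, decompose $L^2 = \beta H^2 \oplus K_\beta \oplus \overline{zH^2}$ and track the pieces that $M_z$ moves into $K_\beta$. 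The term $P_\beta(z(I-P_\beta)h)$ picks up a contribution only from the part of $(I-P_\beta)h$ lying in $\overline{z}\,\overline{H^2} \cap \overline{z^2 H^2}^{\perp}$... — in practice the cleanest route is to note that $S_\beta P_\beta(\overline{z}\varphi f) = P_\beta(\varphi f) - $ (boundary correction). I expect to end up with $S_\beta A^{\alpha,\beta}_\varphi S_\alpha^* f = P_\beta(\varphi f) - \langle f, \chi\rangle k_0^\beta - f(0)\, S_\beta P_\beta(\overline{z}\varphi)$, where the term $\langle f, \chi\rangle k_0^\beta$ with $\chi = P_\alpha(\overline{\varphi})$ arises because the "missing" frequency in passing from $P_\beta(z P_\beta(\overline z \varphi f))$ to $P_\beta(\varphi f)$ is the constant-times-$k_0^\beta$ piece, whose coefficient is $\langle \varphi f, 1\rangle = \langle f, \overline{\varphi}\rangle = \langle f, P_\alpha \overline{\varphi}\rangle = \langle f, \chi\rangle$ (using $f \in K_\alpha$).

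Subtracting, $A^{\alpha,\beta}_\varphi f - S_\beta A^{\alpha,\beta}_\varphi S_\alpha^* f = \langle f, \chi\rangle k_0^\beta + f(0)\, S_\beta P_\beta(\overline{z}\varphi) = (k_0^\beta \otimes \chi)f + \langle f, k_0^\alpha\rangle \psi = (\psi \otimes k_0^\alpha + k_0^\beta \otimes \chi)f$, with $\psi = S_\beta P_\beta(\overline{z}\varphi)$ and $\chi = P_\alpha(\overline{\varphi})$ exactly as claimed; that $\psi \in K_\beta$ and $\chi \in K_\alpha$ is immediate from the presence of $P_\beta$ and $P_\alpha$. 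The identity, first established on $K_\alpha^\infty$, extends to $K_\alpha$ by boundedness and density, and then to $H^2 = K_\alpha \oplus \alpha H^2$ by checking it holds trivially on $\alpha H^2$ — there $S_\alpha^* = S^*_{|K_\alpha}$ annihilates the component, and one verifies $A^{\alpha,\beta}_\varphi$ and the rank-two operator agree (both being defined via projections that kill $\alpha H^2$ appropriately, or by noting $k_0^\alpha = 1 \perp \alpha H^2$). The main obstacle, as indicated, is the careful bookkeeping of the boundary term in $S_\beta P_\beta(zh)$ versus $P_\beta(zh)$ — making the "$\langle f,\chi\rangle k_0^\beta$" term appear with the right constant requires handling the non-analytic part of $\overline{z}\varphi f$, which is cleanest done by the orthogonal decomposition of $L^2$ and by first assuming $f, \varphi$ bounded so that all products lie in $L^2$.
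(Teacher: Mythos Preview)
Your core computation on $K_\alpha^\infty$ is correct and parallels the paper's proof closely: the paper computes $\langle S_\beta A_\varphi^{\alpha,\beta}S_\alpha^* f,g\rangle$ for $f\in K_\alpha^\infty$, $g\in K_\beta^\infty$ and lets the $S_\beta$ pass to $S_\beta^*g=S^*g$ on the right, whereas you keep $S_\beta$ on the left and track the ``boundary correction'' $P_\beta\bigl(z(I-P_\beta)(\overline z\varphi f)\bigr)=\langle f,\chi\rangle k_0^\beta$ directly via the decomposition $L^2=\beta H^2\oplus K_\beta\oplus\overline{zH^2}$. These are two presentations of the same calculation; the paper's weak formulation is slightly slicker because it sidesteps that bookkeeping entirely.

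There is, however, a genuine gap in your extension step. You write that the identity ``extends to $K_\alpha$ by boundedness and density'' and earlier that testing on $K_\alpha^\infty$ ``suffices since all operators in sight are bounded.'' For a general symbol $\varphi\in L^2$ the operator $A_\varphi^{\alpha,\beta}$ is \emph{not} bounded, so this argument fails. The paper handles this differently: for $f\in K_\alpha$ one picks $f_n\in K_\alpha^\infty$ with $f_n\to f$ in $H^2$, observes that $\varphi f_n\to\varphi f$ and $\varphi S_\alpha^*f_n\to\varphi S_\alpha^*f$ in $L^1$, and hence $A_\varphi^{\alpha,\beta}f_n\to A_\varphi^{\alpha,\beta}f$ and $A_\varphi^{\alpha,\beta}S_\alpha^*f_n\to A_\varphi^{\alpha,\beta}S_\alpha^*f$ uniformly on compact subsets of $\mathbb{D}$; the rank-two right-hand side is already bounded, so the identity passes to the limit pointwise in $\mathbb{D}$. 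You need exactly this $L^1$/locally-uniform argument in place of the boundedness claim.

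Two minor points: the remark ``$k_0^\alpha\equiv 1$'' is only correct when $\alpha(0)=0$; in general $k_0^\alpha(z)=1-\overline{\alpha(0)}\alpha(z)$, though of course $\langle f,k_0^\alpha\rangle=f(0)$ still holds by the reproducing property, so your formula survives. And the final extension ``to $H^2=K_\alpha\oplus\alpha H^2$'' is not really needed (and is awkward since $S_\alpha^*$ is only defined on $K_\alpha$); the paper stops at $K_\alpha$.
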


\begin{proof}
Let $f\in K_\alpha^\infty$ and $g\in K_\beta^\infty$. The functions
  $$S_\alpha^* f(z)=S^* f(z) =\frac{f(z)-f(0)}{z}$$
and
  $$S_\beta^* g(z)=S^* g(z)=\frac{g(z)-g(0)}{z}$$
clearly belong to $H^2\cap L^\infty=H^\infty$. Therefore,

\begin{align*}
  \left\langle S_\beta A_\varphi^{\alpha,\beta}S_\alpha^* f,g\right\rangle
  &=\left\langle P_\beta(\varphi S_\alpha^*f),S_\beta^* g\right\rangle
  =\left\langle \varphi \cdot\frac{f-f(0)}{z},S_\beta^* g\right\rangle\\
  &=\left\langle \overline{z}\varphi f,S_\beta^* g\right\rangle
  -f(0)\left\langle \overline{z}\varphi ,S_\beta^* g\right\rangle\\
  &=\left\langle \overline{z}\varphi f,\frac{g-g(0)}{z}\right\rangle
  -\left\langle f,k^\alpha_0 \right\rangle
  \left\langle S_\beta P_\beta(\overline{z}\varphi), g\right\rangle\\
  &=\left\langle \overline{z}\varphi f,\overline{z}g\right\rangle
  -\overline{g(0)}\left\langle \overline{z}\varphi f,\overline{z}\right\rangle
  -\left\langle (S_\beta P_\beta(\overline{z}\varphi)\otimes k^\alpha_0)f, g\right\rangle\\
  &=\left\langle A^{\alpha,\beta}_\varphi f, g\right\rangle
  -\langle k^\beta_0,g \rangle\left\langle f,P_\alpha(\overline{\varphi})\right\rangle
  -\left\langle (S_\beta P_\beta(\overline{z}\varphi)\otimes k^\alpha_0)f, g\right\rangle\\
  &=\left\langle A^{\alpha,\beta}_\varphi f, g\right\rangle
  -\langle (k^\beta_0\otimes P_\alpha(\overline{\varphi}))f,g\rangle
  -\left\langle (S_\beta P_\beta(\overline{z}\varphi)\otimes k^\alpha_0)f, g\right\rangle.
\end{align*}
From this
\begin{align}\label{eq_pr_ATTO_rank2_char_1}
   A^{\alpha,\beta}_\varphi f -S_\beta A^{\alpha,\beta}_\varphi S^*_\alpha f
   =(S_\beta P_\beta(\overline{z}\varphi)\otimes k^\alpha_0)f+(k^\beta_0\otimes P_\alpha(\overline{\varphi}))f \quad \text{for all } f\in K_\alpha^\infty.
\end{align}
For an arbitrary $f\in K_\alpha$ there exists $\{f_n\}\in K_\alpha^\infty$ such that $f_n\to f$ in $H^2$ norm and both $\varphi f_n\to \varphi f$ and $\varphi S^*_\alpha f_n\to\varphi S^*_\alpha f$ in $L^1$. Hence, $A_\varphi^{\alpha,\beta} f_n\to A_\varphi^{\alpha,\beta} f$ and $A_\varphi^{\alpha,\beta} S^*_\alpha f_n\to A_\varphi^{\alpha,\beta} S^*_\alpha f$ uniformly on compact subsets of $\mathbb{D}$, which implies that \eqref{eq_pr_ATTO_rank2_char_1} holds for all $f\in K_\alpha$.
\end{proof}

\begin{lemma}\label{lem_ATTO_rank2_aux}
If $\varphi=\overline{\chi}+\psi$, where $\chi\in K_\alpha$ and $\psi\in K_\beta$, $\psi(0)=0$, then the equality
\begin{align*}
  \langle A_\varphi^{\alpha,\beta}f,g\rangle=\sum_{n=1}^\infty\langle(S^n_\beta \psi\otimes S^n_\alpha k^\alpha_0 +S^n_\beta k^\beta_0\otimes S^n_\alpha \chi)f,g\rangle
\end{align*}
holds for all $f\in K_\alpha^\infty$ and $g\in K_\beta^\infty$.
\end{lemma}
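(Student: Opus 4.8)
The plan is to follow Sarason's argument and iterate the identity of Lemma~\ref{lem_ATTO_rank2_char_aux1}. The first step is to check that, for the symbol $\varphi=\overline\chi+\psi$ under consideration, the two functions supplied by that lemma are exactly the given $\chi$ and $\psi$. Since $\psi\in zH^2$, the function $\overline\psi$ has nonzero Fourier coefficients only at negative indices, so $P_\alpha(\overline\varphi)=P_\alpha(\chi+\overline\psi)=\chi$. Likewise $\overline z\,\overline\chi=\overline{z\chi}$ has nonzero Fourier coefficients only at negative indices, while $\overline z\psi=S^*\psi\in K_\beta$ (using the $S^*$-invariance of $K_\beta$), so $P_\beta(\overline z\varphi)=S^*\psi$ and hence
\begin{align*}
S_\beta P_\beta(\overline z\varphi)=S_\beta S_\beta^*\psi=\psi-\psi(0)k_0^\beta=\psi;
\end{align*}
this is the only place where the hypothesis $\psi(0)=0$ is needed. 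Thus Lemma~\ref{lem_ATTO_rank2_char_aux1} gives
\begin{align*}
A_\varphi^{\alpha,\beta}f-S_\beta A_\varphi^{\alpha,\beta}S_\alpha^*f=\bigl(\psi\otimes k_0^\alpha+k_0^\beta\otimes\chi\bigr)f,\qquad f\in K_\alpha.
\end{align*}

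Next I would iterate this relation. For $u\in K_\beta$, $v\in K_\alpha$ one has $S_\beta(u\otimes v)S_\alpha^*=S_\beta u\otimes S_\alpha v$, because $\langle S_\alpha^*f,v\rangle=\langle f,S_\alpha v\rangle$ for every $f\in K_\alpha$. Applying the displayed identity successively to $f$, $S_\alpha^*f$, $(S_\alpha^*)^2f,\dots$ and summing, an easy induction gives, for each $N\ge1$ and each $f\in K_\alpha$,
\begin{align*}
A_\varphi^{\alpha,\beta}f=S_\beta^N A_\varphi^{\alpha,\beta}(S_\alpha^*)^Nf+\sum_{n=0}^{N-1}\bigl(S_\beta^n\psi\otimes S_\alpha^n k_0^\alpha+S_\beta^n k_0^\beta\otimes S_\alpha^n\chi\bigr)f,
\end{align*}
where the $n=0$ summand is precisely $(\psi\otimes k_0^\alpha+k_0^\beta\otimes\chi)f$.

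Finally, fix $f\in K_\alpha^\infty$ and $g\in K_\beta^\infty$, pair with $g$, and let $N\to\infty$. Using $(S_\alpha^*)^N=S^{*N}$ on $K_\alpha$, $(S_\beta^*)^N=S^{*N}$ on $K_\beta$, and $S^{*N}g\in K_\beta$, the remainder term becomes
\begin{align*}
\langle S_\beta^N A_\varphi^{\alpha,\beta}(S_\alpha^*)^Nf,\,g\rangle=\langle\varphi\,S^{*N}f,\;S^{*N}g\rangle=\int_{\partial\mathbb D}\varphi\,(f-p_Nf)\,\overline{g-p_Ng}\;dm,
\end{align*}
$p_Nf$ denoting the $N$-th partial sum of the MacLaurin series of $f$. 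Since $f,g\in H^\infty\subset H^4$, their partial sums converge to them in $H^4$ (M.~Riesz), so $(f-p_Nf)\overline{(g-p_Ng)}\to0$ in $L^2$ and the integral tends to $0$ because $\varphi\in L^2$. The series on the right converges absolutely: its $n$-th term equals $\widehat f(n)\,\overline{\widehat{\overline\psi g}(n)}+\overline{\widehat g(n)}\,\widehat{\overline\chi f}(n)$, and $\sum_n|\widehat f(n)|^2=\|f\|^2$, $\sum_n|\widehat{\overline\psi g}(n)|^2\le\|\overline\psi g\|_2^2<\infty$ (here $g$ is bounded), and symmetrically for the second half (here $f$ is bounded). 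Letting $N\to\infty$ then yields the stated identity.

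The step I expect to be the main obstacle is the vanishing of the remainder $S_\beta^N A_\varphi^{\alpha,\beta}(S_\alpha^*)^Nf$: the lemma does not assume $A_\varphi^{\alpha,\beta}$ bounded, so one cannot merely invoke $\|S_\beta^N A_\varphi^{\alpha,\beta}(S_\alpha^*)^Nf\|\le\|A_\varphi^{\alpha,\beta}\|\,\|S^{*N}f\|\to0$; the $H^p$-tail estimate above (or a truncation argument for $\varphi$ analogous to the one ending the proof of Lemma~\ref{lem_ATTO_rank2_char_aux1}) is what is really required. (A variant: the identity can also be obtained directly from the Fourier-coefficient expansion just used, by expanding $\langle A_\varphi^{\alpha,\beta}f,g\rangle=\langle\psi f,g\rangle+\langle\overline\chi f,g\rangle$ and rearranging, which sidesteps the iteration.)
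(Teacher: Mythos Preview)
Your argument is essentially the paper's: verify that Lemma~\ref{lem_ATTO_rank2_char_aux1} returns exactly the given pair $(\psi,\chi)$, telescope, and show the remainder $\langle S_\beta^{N}A_\varphi^{\alpha,\beta}(S_\alpha^*)^{N}f,g\rangle\to0$. The only difference is in this last step: rather than your $H^4$ argument via M.~Riesz, the paper splits $\varphi=\overline\chi+\psi$ and bounds each half using $\|P(\overline z^{\,N+1}\overline\chi f)\|_2\le\|f\|_\infty\|\chi\|_2$ (and the symmetric estimate with $\psi,g$) together with the strong convergence $(S^*)^N\to0$, which is a touch more elementary but equivalent in effect. (Your telescoping correctly starts the sum at $n=0$; the lower index $n=1$ in the displayed formula is a typo carried through the paper.)
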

\begin{proof}
Note that if $\varphi=\overline{\chi}+\psi$, where $\chi\in K_\alpha$ and $\psi\in K_\beta$, $\psi(0)=0$, then
  $$P_\alpha(\overline{\varphi})=\chi \qquad \text{and }\qquad S_\beta P_\beta(\overline{z}\varphi)
  =S_\beta P_\beta(\overline{z}\psi)=S_\beta S^*_\beta \psi=\psi.$$
By Lemma \ref{lem_ATTO_rank2_char_aux1},
  $$A^{\alpha,\beta}_\varphi -S_\beta A^{\alpha,\beta}_\varphi S^*_\alpha
   =\psi\otimes k^\alpha_0+k^\beta_0\otimes \chi,$$
and for every $n\geq 0$,
\begin{align*}
  &S^n_\beta A_\varphi^{\alpha,\beta} (S^*_\alpha)^n -S^{n+1}_\beta A_\varphi^{\alpha,\beta} (S^*_\alpha)^{n+1}
  =S^n_\beta \psi\otimes S^n_\alpha k^\alpha_0+S^n_\beta k^\beta_0\otimes S^n_\alpha \chi.
\end{align*}
From this
  $$A_\varphi^{\alpha,\beta}
  =\sum_{n=1}^N (S^n_\beta \psi\otimes S^n_\alpha k^\alpha_0
  +S^n_\beta k^\beta_0\otimes S^n_\alpha \chi)
  +S^{N+1}_\beta A_\varphi^{\alpha,\beta} (S^*_\alpha)^{N+1}$$
    for every $N\geq 1$.

    If $f\in K_\alpha^\infty$ and $g\in K_\beta^\infty$, then
\begin{displaymath}
\begin{split}
  \left\langle S^{N+1}_\beta\right.& \left. A_\varphi^{\alpha,\beta}(S^*_\alpha)^{N+1} f,g\right\rangle\\
  &=\left\langle A_{\overline{\chi}}^{\alpha,\beta}(S^*_\alpha)^{N+1} f,
  (S^*_\beta)^{N+1} g\right\rangle
  +\left\langle A_\psi^{\alpha,\beta}(S^*_\alpha)^{N+1} f,(S^*_\beta)^{N+1} g\right\rangle\\
  &=\left\langle T_{\overline{\chi}}(S^*)^{N+1} f,
  (S^*)^{N+1} g\right\rangle
  +\left\langle (S^*)^{N+1} f,T_{\overline{\psi}}(S^*)^{N+1} g\right\rangle\\
  &=\left\langle P(\overline{z}^{N+1}\overline{\chi}f) ,
   (S^*)^{N+1}g\right\rangle
  +\left\langle  (S^*)^{N+1}f, P(\overline{z}^{N+1}\overline{\psi}g)\right\rangle \to 0\quad \text{as } N\to 0
  \end{split}
\end{displaymath}
since $$\|P(\overline{z}^{N+1}\overline{\chi}f)\|_2\leq \|f\|_{\infty}\cdot\|\chi\|_2,\qquad \|P(\overline{z}^{N+1}\overline{\psi}g)\|_2\leq \|g\|_{\infty}\cdot\|\psi\|_2$$ and $(S^*)^N\to 0$ in the strong operator topology. Therefore
  $$\langle A_\varphi^{\alpha,\beta}f,g\rangle
  =\sum_{n=1}^\infty \langle (S^n_\beta \psi\otimes S^n_\alpha k^\alpha_0
  +S^n_\beta k^\beta_0\otimes S^n_\alpha \chi)f,g\rangle$$
  for all $f\in K_{\alpha}^{\infty}$ and $g\in K_{\beta}^{\infty}$.

\end{proof}

\begin{proof}[Proof of Theorem \ref{thm_ATTO_rank_two_char}]
If $A\in\mathscr{T}(\alpha,\beta)$, then $A=A_\varphi^{\alpha,\beta}$ for some $\varphi\in L^2$ and it satisfies \eqref{eq_ATTO_rank_two_char} by Lemma \ref{lem_ATTO_rank2_char_aux1}.

Assume now that $A$ is a bounded linear operator from $K_\alpha$ into $K_\beta$ such that \eqref{eq_ATTO_rank_two_char} holds for $\psi\in K_\beta$ and $\chi\in K_\alpha$. Without any loss of generality we can assume that $\psi(0)=0$. Indeed, if this was not the case we would replace $\psi$ and $\chi$ with $\psi-c k^\beta_0$ and $\chi+\overline{c} k^\alpha_0$, respectively, for $c=\psi(0)/(1-|\beta(0)|^2)$.

Define $\varphi=\overline{\chi}+\psi$. By Lemma \ref{lem_ATTO_rank2_aux}, for every $f\in K_{\alpha}^{\infty}$ and $g\in K_{\beta}^{\infty}$,
  $$\langle A_\varphi^{\alpha,\beta}f,g\rangle
  =\sum_{n=1}^\infty\langle (S^n_\beta \psi\otimes S^n_\alpha k^\alpha_0
  +S^n_\beta k^\beta_0\otimes S^n_\alpha \chi)f,g\rangle.$$
But an argument similar to the one given in the proof of Lemma \ref{lem_ATTO_rank2_char_aux1} shows that
  $$\langle Af,g\rangle=\sum_{n=1}^\infty \langle (S^n_\beta \psi\otimes S^n_\alpha k^\alpha_0
  +S^n_\beta k^\beta_0\otimes S^n_\alpha \chi)f,g\rangle,$$
and so $A=A_\varphi^{\alpha,\beta}\in \mathscr{T}(\alpha,\beta)$.
\end{proof}

\begin{corollary}\label{cor_ATTO_rank2}
Let $A$ be a bounded linear operator form $K_\alpha$ into $K_\beta$. Then $A\in\mathscr{T}(\alpha,\beta)$ if and only if
\begin{equation}\label{eq_ATTO_rank_two_char2}
A-S^*_\beta A S_\alpha=\psi\otimes \widetilde{k}^\alpha_0 +\widetilde{k}^\beta_0\otimes \chi
\end{equation}
for some $\psi\in K_\beta$ and $\chi\in K_\alpha$.
\end{corollary}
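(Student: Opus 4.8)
The plan is to reduce the Corollary to Theorem \ref{thm_ATTO_rank_two_char} by conjugating with the involutions $C_\alpha$ and $C_\beta$. The first thing I would record is that the map $A\mapsto C_\beta A C_\alpha$ sends $\mathscr{T}(\alpha,\beta)$ onto itself. Indeed, since $C_\gamma$ is antiunitary on $L^2$ and maps $K_\gamma$ onto $K_\gamma$, it maps $K_\gamma^{\perp}$ onto $K_\gamma^{\perp}$, so $C_\gamma P_\gamma=P_\gamma C_\gamma$ for $\gamma\in\{\alpha,\beta\}$. Using this together with the defining formula for $C_\alpha,C_\beta$, for $\varphi\in L^2$ and $f\in K_\alpha$ one computes
\[
  C_\beta A^{\alpha,\beta}_\varphi C_\alpha f=C_\beta P_\beta(\varphi\, C_\alpha f)=P_\beta\big(C_\beta(\varphi\, C_\alpha f)\big)=P_\beta\big(\beta\overline{\alpha}\,\overline{\varphi}\,f\big)=A^{\alpha,\beta}_{\beta\overline{\alpha}\overline{\varphi}}f .
\]
As $|\beta\overline{\alpha}|=1$ a.e.\ and $C_\alpha,C_\beta$ are isometric, $C_\beta A^{\alpha,\beta}_\varphi C_\alpha$ is again a bounded asymmetric truncated Toeplitz operator; since $A\mapsto C_\beta A C_\alpha$ is involutive, it follows that for a bounded $A\colon K_\alpha\to K_\beta$ we have $A\in\mathscr{T}(\alpha,\beta)$ if and only if $B:=C_\beta A C_\alpha\in\mathscr{T}(\alpha,\beta)$.

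Next I would isolate the two algebraic identities that govern how conjugation transforms the rank-two relation. A direct computation (pulling $C_\gamma$ through $P_\gamma$ as above and using $C_\gamma M_z=M_{\overline z}C_\gamma$ on $L^2$) gives $C_\gamma S_\gamma C_\gamma=S^*_\gamma$, hence also $C_\gamma S^*_\gamma C_\gamma=S_\gamma$, for $\gamma\in\{\alpha,\beta\}$. And for rank-one operators, antilinearity of the conjugations together with the identity $\langle v,C_\alpha f\rangle=\langle f,C_\alpha v\rangle$ yields
\[
  C_\beta\,(u\otimes v)\,C_\alpha=(C_\beta u)\otimes(C_\alpha v)\qquad\text{for }u\in K_\beta,\ v\in K_\alpha .
\]
Consequently, conjugating any identity $B-S_\beta B S^*_\alpha=\psi'\otimes k^\alpha_0+k^\beta_0\otimes\chi'$ by $C_\beta(\,\cdot\,)C_\alpha$, and inserting $C_\gamma^2=I$ in the operator products, turns the left-hand side into $C_\beta BC_\alpha-(C_\beta S_\beta C_\beta)(C_\beta B C_\alpha)(C_\alpha S^*_\alpha C_\alpha)=A-S^*_\beta A S_\alpha$, while the right-hand side becomes $(C_\beta\psi')\otimes(C_\alpha k^\alpha_0)+(C_\beta k^\beta_0)\otimes(C_\alpha\chi')=(C_\beta\psi')\otimes\widetilde{k}^\alpha_0+\widetilde{k}^\beta_0\otimes(C_\alpha\chi')$, by the very definition of the conjugate kernels. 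Running the same computation in reverse, and using $C_\alpha\widetilde{k}^\alpha_0=k^\alpha_0$, $C_\beta\widetilde{k}^\beta_0=k^\beta_0$, converts an identity of the form \eqref{eq_ATTO_rank_two_char2} into one of the form \eqref{eq_ATTO_rank_two_char}.

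With these preparations the proof is immediate. If $A\in\mathscr{T}(\alpha,\beta)$, then $B=C_\beta A C_\alpha\in\mathscr{T}(\alpha,\beta)$, so Theorem \ref{thm_ATTO_rank_two_char} gives $\psi'\in K_\beta$ and $\chi'\in K_\alpha$ with $B-S_\beta B S^*_\alpha=\psi'\otimes k^\alpha_0+k^\beta_0\otimes\chi'$; conjugating as above yields \eqref{eq_ATTO_rank_two_char2} with $\psi=C_\beta\psi'\in K_\beta$ and $\chi=C_\alpha\chi'\in K_\alpha$. Conversely, if \eqref{eq_ATTO_rank_two_char2} holds, conjugating produces an identity of the form \eqref{eq_ATTO_rank_two_char} for $B=C_\beta A C_\alpha$, whence $B\in\mathscr{T}(\alpha,\beta)$ by Theorem \ref{thm_ATTO_rank_two_char} and therefore $A=C_\beta B C_\alpha\in\mathscr{T}(\alpha,\beta)$. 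I expect the only real point requiring care to be the bookkeeping with the antilinear maps: correctly tracking complex conjugates when moving $C_\alpha,C_\beta$ past inner products, through operator products, and through the tensors $u\otimes v$, and verifying that $C_\alpha,C_\beta$ commute with the projections $P_\alpha,P_\beta$; once that is in place, everything reduces to Theorem \ref{thm_ATTO_rank_two_char}.
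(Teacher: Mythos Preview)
Your proof is correct and follows essentially the same route as the paper: both reduce the statement to Theorem~\ref{thm_ATTO_rank_two_char} by passing to $B=C_\beta A C_\alpha$, using that $A\in\mathscr{T}(\alpha,\beta)$ iff $B\in\mathscr{T}(\alpha,\beta)$, the intertwining relations $C_\gamma S_\gamma C_\gamma=S_\gamma^*$, and the rank-one identity $C_\beta(u\otimes v)C_\alpha=(C_\beta u)\otimes(C_\alpha v)$. The only difference is that you supply these ingredients explicitly, whereas the paper cites \cite{L2} for the first and invokes ``the symmetry of compressed shifts'' for the second.
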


\begin{proof}

Recall first that the linear operator $A$ belongs to $\mathscr{T}(\alpha,\beta)$ if and only if $B=C_{\beta}AC_{\alpha}$ belongs to $\mathscr{T}(\alpha,\beta)$ (see \cite[p. 9]{L2}).

If $A\in\mathscr{T}(\alpha,\beta)$, then $C_{\beta}AC_{\alpha}\in\mathscr{T}(\alpha,\beta)$. By Theorem \ref{thm_ATTO_rank_two_char}, there exist functions $\chi_0\in K_{\alpha}$ and $\psi_0\in K_{\beta}$ such that
\begin{equation*}
C_{\beta}AC_{\alpha}-S_{\beta}C_{\beta}AC_{\alpha} S_{\alpha}^{*}=\psi_0\otimes k_{0}^{\alpha}+k_{0}^{\beta}\otimes\chi_0.
\end{equation*}
Thus (using the symmetry of compressed shifts) we get

\begin{displaymath}
\begin{split}
A-S_{\beta}^{*}A S_{\alpha}&=C_{\beta}^2AC_{\alpha}^2-C_{\beta}S_{\beta}C_{\beta}A C_{\alpha}S_{\alpha}^{*}C_{\alpha}\\
&=C_{\beta}(\psi_0\otimes k_{0}^{\alpha}+k_{0}^{\beta}\otimes\chi_0)C_{\alpha}=\widetilde{\psi}_0\otimes \widetilde{k}_{0}^{\alpha}+\widetilde{k}_{0}^{\beta}\otimes\widetilde{\chi}_0,
\end{split}
\end{displaymath}
and $A$ satisfies \eqref{eq_ATTO_rank_two_char2} with
$$\psi=\widetilde{\psi}_0\quad\mathrm{and}\quad\chi=\widetilde{\chi}_0.$$

The other implication can be proved in a similar way.
\end{proof}

Conditions from Corollary \ref{cor_ATTO_rank2} can also be formulated in terms of modified compressed shifts.

\begin{corollary}\label{cor_ATTO_rank2_MCS}
Let $A$ be a bounded linear operator form $K_\alpha$ into $K_\beta$. Then $A\in\mathscr{T}(\alpha,\beta)$ if and only if one (and all) of the following conditions holds
\begin{enumerate}
  \item[(a)] $A-S_{\beta,b} A S^*_{\alpha,a}=\psi\otimes k^\alpha_0 +k^\beta_0\otimes \chi$;
  \item[(b)] $A-S^*_{\beta,b} A S_{\alpha,a}=\psi\otimes \widetilde{k}^\alpha_0 +\widetilde{k}^\beta_0\otimes \chi$;
\end{enumerate}
for some $a,b\in\mathbb{C}$, $\psi\in K_\beta$ and $\chi\in K_\alpha$ (possibly different for different conditions), where
  $$S_{\alpha,a}=S_\alpha+a (k^\alpha_0\otimes \widetilde{k}^\alpha_0),\qquad
  S_{\beta,b}=S_\beta+ b(k^\beta_0\otimes \widetilde{k}^\beta_0)$$
are the modified compressed shifts.
\end{corollary}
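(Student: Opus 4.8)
The plan is to deduce both (a) and (b) from Theorem~\ref{thm_ATTO_rank_two_char} and Corollary~\ref{cor_ATTO_rank2} by the simple observation that replacing the compressed shifts $S_\alpha,S_\beta$ by the modified compressed shifts $S_{\alpha,a},S_{\beta,b}$ changes the relevant expression only by rank-one (and one rank-two) operators that already have the required one-sided tensor form. As preliminaries I would record the adjoint formulas $S_{\alpha,a}^{*}=S_\alpha^{*}+\overline{a}\,(\widetilde{k}^{\alpha}_{0}\otimes k^{\alpha}_{0})$ and $S_{\beta,b}^{*}=S_\beta^{*}+\overline{b}\,(\widetilde{k}^{\beta}_{0}\otimes k^{\beta}_{0})$, together with the elementary identities $T(u\otimes v)=(Tu)\otimes v$, $(u\otimes v)T=u\otimes(T^{*}v)$ and $(u_1\otimes v_1)(u_2\otimes v_2)=\langle u_2,v_1\rangle(u_1\otimes v_2)$ for rank-one operators.

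For (a), I would expand $S_{\beta,b}\,A\,S_{\alpha,a}^{*}$ using the above. Besides the main term $S_\beta A S_\alpha^{*}$ one obtains exactly three correction terms, namely $\overline{a}\,(S_\beta A\widetilde{k}^{\alpha}_{0})\otimes k^{\alpha}_{0}$, then $b\,k^{\beta}_{0}\otimes(S_\alpha A^{*}\widetilde{k}^{\beta}_{0})$, and finally $b\overline{a}\,\langle A\widetilde{k}^{\alpha}_{0},\widetilde{k}^{\beta}_{0}\rangle\,(k^{\beta}_{0}\otimes k^{\alpha}_{0})$. Since $A$ is bounded we have $S_\beta A\widetilde{k}^{\alpha}_{0}\in K_\beta$ and $S_\alpha A^{*}\widetilde{k}^{\beta}_{0}\in K_\alpha$, so each of these three terms is of the form $\psi'\otimes k^{\alpha}_{0}+k^{\beta}_{0}\otimes\chi'$ with $\psi'\in K_\beta$, $\chi'\in K_\alpha$. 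Hence $A-S_{\beta,b}\,A\,S_{\alpha,a}^{*}$ and $A-S_\beta A S_\alpha^{*}$ differ by an operator of admissible form, so $A$ satisfies (a) for some $a,b,\psi,\chi$ if and only if $A-S_\beta A S_\alpha^{*}$ has the form in \eqref{eq_ATTO_rank_two_char} for some $\psi,\chi$, which by Theorem~\ref{thm_ATTO_rank_two_char} holds if and only if $A\in\mathscr{T}(\alpha,\beta)$. (In the direction $A\in\mathscr{T}(\alpha,\beta)\Rightarrow\text{(a)}$ the same computation shows (a) holds for every $a,b$.)

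For (b), I would use the conjugation trick already exploited in Corollary~\ref{cor_ATTO_rank2}. The key point is the identity $C_\gamma S_{\gamma,c}^{*}C_\gamma=S_{\gamma,c}$ for $\gamma\in\{\alpha,\beta\}$ and $c\in\mathbb{C}$, which follows from $C_\gamma S_\gamma C_\gamma=S_\gamma^{*}$, from the general rule $C(u\otimes v)C=(Cu)\otimes(Cv)$ for a conjugation $C$ (so $C_\gamma(k^{\gamma}_{0}\otimes\widetilde{k}^{\gamma}_{0})C_\gamma=\widetilde{k}^{\gamma}_{0}\otimes k^{\gamma}_{0}$), and from the antilinearity of $C_\gamma$. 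Given $A-S_{\beta,b}^{*}\,A\,S_{\alpha,a}=\psi\otimes\widetilde{k}^{\alpha}_{0}+\widetilde{k}^{\beta}_{0}\otimes\chi$, I would apply $C_\beta(\cdot)C_\alpha$, insert $C_\alpha^{2}=I$ and $C_\beta^{2}=I$, and use $C_\beta(u\otimes v)C_\alpha=(C_\beta u)\otimes(C_\alpha v)$; writing $B=C_\beta A C_\alpha$ this yields $B-S_{\beta,b}\,B\,S_{\alpha,a}^{*}=\widetilde{\psi}\otimes k^{\alpha}_{0}+k^{\beta}_{0}\otimes\widetilde{\chi}$, i.e.\ $B$ satisfies (a). By the case already settled $B\in\mathscr{T}(\alpha,\beta)$, and since $\mathscr{T}(\alpha,\beta)$ is invariant under $T\mapsto C_\beta T C_\alpha$ we get $A=C_\beta B C_\alpha\in\mathscr{T}(\alpha,\beta)$; the converse is identical, read backwards. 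Alternatively, (b) can be obtained by a direct expansion of $S_{\beta,b}^{*}\,A\,S_{\alpha,a}$ strictly parallel to the one in (a), invoking Corollary~\ref{cor_ATTO_rank2} in place of Theorem~\ref{thm_ATTO_rank_two_char}.

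I do not expect a genuine obstacle here: everything takes place at the level of bounded operators and finite-rank perturbations, with no density argument needed. The only points requiring care are the bookkeeping of the perturbation terms --- checking that each one lands in the correct one-sided class $(\cdot)\otimes k^{\alpha}_{0}$ or $k^{\beta}_{0}\otimes(\cdot)$ (respectively $(\cdot)\otimes\widetilde{k}^{\alpha}_{0}$ or $\widetilde{k}^{\beta}_{0}\otimes(\cdot)$) --- and keeping track of the complex conjugates introduced by the antilinearity of $C_\alpha$ and $C_\beta$ in the reduction of (b) to (a).
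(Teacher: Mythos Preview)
Your proposal is correct and is exactly the argument the paper has in mind: it simply defers to Sarason's Theorem~7.1, whose proof is precisely the rank-one bookkeeping you carry out, combined with Theorem~\ref{thm_ATTO_rank_two_char} and Corollary~\ref{cor_ATTO_rank2}. In fact you supply the details the paper omits, and your alternative route for (b) via the conjugations $C_\alpha,C_\beta$ mirrors the proof of Corollary~\ref{cor_ATTO_rank2} itself.
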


\begin{proof}
The proof uses Corollary \ref{cor_ATTO_rank2} and is analogous to the proof given in \cite[Theorem 7.1]{s}. The details are therefore left to the reader.
\end{proof}

\begin{corollary}
$\mathscr{T}(\alpha,\beta)$ is closed in the weak operator topology.
\end{corollary}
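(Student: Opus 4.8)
The plan is to deduce this from the characterization in Theorem \ref{thm_ATTO_rank_two_char}. We may assume $K_\alpha\neq\{0\}$ and $K_\beta\neq\{0\}$ (otherwise $\mathscr{T}(\alpha,\beta)$ contains only the zero operator and there is nothing to prove), so that $\alpha$ and $\beta$ are nonconstant and the numbers $k^\alpha_0(0)=1-|\alpha(0)|^2$ and $k^\beta_0(0)=1-|\beta(0)|^2$ are both positive. Let $(A_i)$ be a net in $\mathscr{T}(\alpha,\beta)$ converging to a bounded operator $A\colon K_\alpha\to K_\beta$ in the weak operator topology. Since $\langle S_\beta A_i S^*_\alpha f,g\rangle=\langle A_i S^*_\alpha f,S^*_\beta g\rangle\to\langle A S^*_\alpha f,S^*_\beta g\rangle$ for all $f\in K_\alpha$, $g\in K_\beta$, the operators $B_i:=A_i-S_\beta A_i S^*_\alpha$ converge in the weak operator topology to $B:=A-S_\beta A S^*_\alpha$. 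By Theorem \ref{thm_ATTO_rank_two_char} (using the normalization from its proof) there are, for each $i$, functions $\psi_i\in K_\beta$ with $\psi_i(0)=0$ and $\chi_i\in K_\alpha$ with $B_i=\psi_i\otimes k^\alpha_0+k^\beta_0\otimes\chi_i$; in other words $B_if=f(0)\psi_i+\langle f,\chi_i\rangle k^\beta_0$ for every $f\in K_\alpha$. It remains to show that $B$ is again of this form and then to invoke Theorem \ref{thm_ATTO_rank_two_char} once more.

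The crucial point is that $\chi_i$ and $\psi_i$ can be recovered from $B_i$ by expressions that behave well under weak-operator convergence. Pairing $B_if$ with $k^\beta_0$ and using $\psi_i(0)=0$ and $\|k^\beta_0\|^2=k^\beta_0(0)$ gives $\langle f,\chi_i\rangle=(1-|\beta(0)|^2)^{-1}\langle B_if,k^\beta_0\rangle$ for all $f\in K_\alpha$, so $\langle f,\chi_i\rangle\to(1-|\beta(0)|^2)^{-1}\langle Bf,k^\beta_0\rangle$; the right-hand side is a bounded linear functional of $f$, hence by the Riesz representation theorem there is $\chi\in K_\alpha$ with $\langle f,\chi_i\rangle\to\langle f,\chi\rangle$ for every $f\in K_\alpha$. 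In particular $\langle k^\alpha_0,\chi_i\rangle\to\langle k^\alpha_0,\chi\rangle$. Evaluating $B_i$ at $k^\alpha_0$ yields $\psi_i=(1-|\alpha(0)|^2)^{-1}(B_ik^\alpha_0-\langle k^\alpha_0,\chi_i\rangle k^\beta_0)$, and since $B_ik^\alpha_0\to Bk^\alpha_0$ weakly in $K_\beta$, the vectors $\psi_i$ converge weakly in $K_\beta$ to $\psi:=(1-|\alpha(0)|^2)^{-1}(Bk^\alpha_0-\langle k^\alpha_0,\chi\rangle k^\beta_0)\in K_\beta$.

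Now fix $f\in K_\alpha$ and $g\in K_\beta$ and pass to the limit in $\langle B_if,g\rangle=f(0)\langle\psi_i,g\rangle+\langle f,\chi_i\rangle\langle k^\beta_0,g\rangle$: the left side tends to $\langle Bf,g\rangle$ and the right side to $f(0)\langle\psi,g\rangle+\langle f,\chi\rangle\langle k^\beta_0,g\rangle=\langle(\psi\otimes k^\alpha_0+k^\beta_0\otimes\chi)f,g\rangle$. Hence $A-S_\beta A S^*_\alpha=B=\psi\otimes k^\alpha_0+k^\beta_0\otimes\chi$ with $\psi\in K_\beta$ and $\chi\in K_\alpha$, so $A\in\mathscr{T}(\alpha,\beta)$ by Theorem \ref{thm_ATTO_rank_two_char}.

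The one step that needs care is the middle one: converting weak-operator convergence of the operators $B_i$ into convergence of the associated symbols. It works precisely because, after normalizing $\psi_i(0)=0$, each symbol is an explicit WOT-continuous function of $B_i$ (the nonvanishing of $k^\alpha_0(0)$ and $k^\beta_0(0)$ is what makes the inversion possible), and the Riesz theorem then supplies the limiting symbols. Alternatively, one could argue abstractly that $\mathscr{T}(\alpha,\beta)=\Phi^{-1}(\mathcal{R})$, where $\Phi(T)=T-S_\beta T S^*_\alpha$ is WOT-continuous and $\mathcal{R}=\{\psi\otimes k^\alpha_0+k^\beta_0\otimes\chi:\psi\in K_\beta,\ \chi\in K_\alpha\}$ coincides with the set of all $T$ satisfying $\langle Tf,g\rangle=0$ whenever $f(0)=0$ and $g\perp k^\beta_0$; the latter is an intersection of kernels of WOT-continuous linear functionals, hence WOT-closed, and so is its preimage under $\Phi$.
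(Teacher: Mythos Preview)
Your argument is correct. The recovery of $\chi_i$ and $\psi_i$ from $B_i$ via the formulas
\[
\langle f,\chi_i\rangle=(1-|\beta(0)|^2)^{-1}\langle B_i f,k_0^\beta\rangle,\qquad
\psi_i=(1-|\alpha(0)|^2)^{-1}\bigl(B_i k_0^\alpha-\langle k_0^\alpha,\chi_i\rangle k_0^\beta\bigr)
\]
is exactly the right idea, and your passage to the limit is clean. The alternative description of $\mathcal{R}$ as the set of $T$ annihilating all pairs $(f,g)$ with $f(0)=0$ and $g\perp k_0^\beta$ is also correct and gives the result in one stroke.

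As for comparison: the paper does not actually supply a proof here but refers to \cite{s}, \cite{ptak}, \cite{L2}. The standard route in those references is precisely the one you take---use the rank-two characterization (or, equivalently, the shift-invariance characterization in Corollary~\ref{cor_ATTO_rank2_SI}) to write $\mathscr{T}(\alpha,\beta)$ as the preimage under a WOT-continuous map of a WOT-closed set. Your abstract reformulation in the last paragraph is in fact the slickest version of this: it avoids nets altogether and exhibits $\mathscr{T}(\alpha,\beta)$ directly as an intersection of kernels of WOT-continuous functionals. Either way, you have filled in what the paper left to the references.
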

\begin{proof}
See \cite{s}, \cite{ptak} or \cite{L2}.
\end{proof}

Using Corollary \ref{cor_ATTO_rank2} we can now characterize the operators from $\mathscr{T}(\alpha,\beta)$ in terms of shift invariance. The following corollary was first noted in \cite{ptak} (for the case when $\beta$ divides $\alpha$).

\begin{corollary}\label{cor_ATTO_rank2_SI}
Let $A$ be a bounded linear operator form $K_\alpha$ into $K_\beta$. Then $A\in\mathscr{T}(\alpha,\beta)$ if and only if it has the following property
$$\left\langle AS f, S g\right\rangle=\left\langle A f,g \right\rangle$$
 for all $f\in K_\alpha$, $g\in K_\beta$ such that $Sf\in K_\alpha$, $Sg\in K_\beta$.
\end{corollary}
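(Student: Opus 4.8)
I would prove Corollary~\ref{cor_ATTO_rank2_SI} by showing that the shift-invariance property is equivalent to condition~(a) of Corollary~\ref{cor_ATTO_rank2_MCS} with $a=b=0$, i.e. to
$$A - S_\beta A S^*_\alpha = \psi \otimes k^\alpha_0 + k^\beta_0 \otimes \chi$$
for some $\psi \in K_\beta$, $\chi \in K_\alpha$, which by Theorem~\ref{thm_ATTO_rank_two_char} is exactly membership in $\mathscr{T}(\alpha,\beta)$. So the whole corollary reduces to relating the rank-two defect condition to the orthogonality statement about $\langle ASf, Sg\rangle - \langle Af, g\rangle$.

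\emph{Necessity.} Suppose $A \in \mathscr{T}(\alpha,\beta)$ and take $f \in K_\alpha$, $g \in K_\beta$ with $Sf \in K_\alpha$, $Sg \in K_\beta$. Since $Sf \in K_\alpha$ we have $S^*_\alpha(Sf) = S^* S f = f$, and likewise $S^*_\beta(Sg) = g$; also $S_\beta$ agrees with $S$ on $K_\beta$ wherever $Sg$ stays in $K_\beta$. Using Theorem~\ref{thm_ATTO_rank_two_char} applied to the pair $(Sf, Sg)$,
$$\langle A(Sf), Sg\rangle - \langle S_\beta A S^*_\alpha (Sf), Sg\rangle = \langle Sf, k^\alpha_0\rangle \langle \psi, Sg\rangle + \langle k^\beta_0, Sg\rangle \langle Sf, \chi\rangle.$$
The right-hand side vanishes: $\langle Sf, k^\alpha_0\rangle = (Sf)(0) = 0$ and $\langle k^\beta_0, Sg\rangle = \overline{(Sg)(0)} = 0$ because $Sf, Sg$ vanish at the origin. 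The middle term is $\langle A S^*_\alpha(Sf), S^*_\beta(Sg)\rangle = \langle Af, g\rangle$ after moving $S_\beta$ to the other side as $S^*_\beta$ and using the identifications above. Hence $\langle A(Sf), Sg\rangle = \langle Af, g\rangle$, which is the claimed property.

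\emph{Sufficiency.} This is the step I expect to be the main obstacle, since one must recover the rank-two defect structure from information only on the subspace of vectors that survive multiplication by $z$. The idea: assume $\langle ASf, Sg\rangle = \langle Af, g\rangle$ whenever $Sf \in K_\alpha$, $Sg \in K_\beta$, and consider the sesquilinear form
$$\Phi(f,g) = \langle Af, g\rangle - \langle S_\beta A S^*_\alpha f, g\rangle = \langle Af, g\rangle - \langle A S^*_\alpha f, S^*_\beta g\rangle, \qquad f \in K_\alpha,\ g \in K_\beta.$$
I would show $\Phi$ is annihilated whenever $f \perp k^\alpha_0$ and $g \perp k^\beta_0$: if $f(0) = 0$ then $f' := S^*_\alpha f$ satisfies $S f' = f \in K_\alpha$, and similarly $g(0)=0$ gives $S g' = g$ with $g' = S^*_\beta g$, so the hypothesis applied to $(f',g')$ yields $\langle A(Sf'), Sg'\rangle = \langle Af', g'\rangle$, i.e. $\langle Af, g\rangle = \langle A S^*_\alpha f, S^*_\beta g\rangle$, which is exactly $\Phi(f,g)=0$. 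Therefore $\Phi$ is supported on the orthogonal complements of $(K_\alpha \ominus \mathbb{C}k^\alpha_0)$ in the first slot and $(K_\beta \ominus \mathbb{C}k^\beta_0)$ in the second; decomposing $f = f_0 + c_f k^\alpha_0$, $g = g_0 + c_g k^\beta_0$ shows that the bounded form $\Phi$ factors through these one-dimensional pieces, hence has the shape $\langle \psi \otimes k^\alpha_0 f, g\rangle + \langle k^\beta_0 \otimes \chi f, g\rangle$ for suitable $\psi \in K_\beta$, $\chi \in K_\alpha$ determined by the action on $k^\alpha_0$ and the pairing against $k^\beta_0$; boundedness of $A$ guarantees $\psi, \chi$ are genuine elements of the respective model spaces. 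This gives~\eqref{eq_ATTO_rank_two_char}, and Theorem~\ref{thm_ATTO_rank_two_char} finishes the proof. The one technical point to be careful about is the passage from the dense subspaces $K_\alpha^\infty$, $K_\beta^\infty$ (where the vectors $f', g'$ are honestly bounded) to all of $K_\alpha, K_\beta$, which follows by the same continuity argument used in Lemma~\ref{lem_ATTO_rank2_char_aux1} together with boundedness of $A$.
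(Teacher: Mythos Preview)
Your argument is correct. The paper itself does not write out a proof but points to \cite{L2,ptak} and signals the route via Corollary~\ref{cor_ATTO_rank2}; you instead go through Theorem~\ref{thm_ATTO_rank_two_char}. These are dual versions of the same idea: the condition $Sf\in K_\alpha$ is exactly $f\perp\widetilde{k}^\alpha_0$, so with Corollary~\ref{cor_ATTO_rank2} one applies the defect identity directly to $(f,g)$, whereas with Theorem~\ref{thm_ATTO_rank_two_char} you first pass to $(Sf,Sg)$, which are orthogonal to $k^\alpha_0,k^\beta_0$. In either case the sufficiency step is the same linear-algebra fact you use implicitly: a bounded operator $B$ with $\langle Bf,g\rangle=0$ whenever $f\perp u$ and $g\perp v$ can be written as $\psi\otimes u+v\otimes\chi$ (take $\psi=Bu/\|u\|^2$ and $\chi=B^{*}v/\|v\|^2-\overline{c}\,u$ for the appropriate constant $c$). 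One small remark: your final comment about passing through $K_\alpha^\infty$, $K_\beta^\infty$ is not needed here, since $A$ is assumed bounded and your form $\Phi$ is therefore defined and continuous on all of $K_\alpha\times K_\beta$ from the start.
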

\begin{proof}
See \cite{L2}, \cite{ptak}.
\end{proof}

\end{document}